\newcommand\PP{{\mathbb P}}
\newcommand\PGL{\operatorname{PGL}}
\newcommand\aut{\operatorname{Aut}}
\renewcommand\dim{\operatorname{dim}}
\newcommand\hookra{\hookrightarrow}
\newcommand\da{\downarrow}
\renewcommand{\hom}{\operatorname{Hom}}
\newcommand\ext{\operatorname{Ext}}
\newcommand\sr{\stackrel}
\newcommand\cM{{\mathscr M}}
\newcommand\cN{{\mathcal N}}
\newcommand\cO{{\mathscr O}}
\newcommand\cC{{\mathscr C}}
\newcommand\cI{{\mathscr I}}
\newcommand\cT{{\mathscr T}}
\def\co{\colon\thinspace}
\newcommand\End{\operatorname{End}}
\newcommand\sym{\operatorname{Sym}}
\newcommand\hilb{\operatorname{Hilb}}
\newcommand\slfrak{\operatorname{\mathfrak{sl}}}
\newcommand\autfrak{\operatorname{\mathfrak{aut}}}
\newtheorem{theorem}{Theorem}[section]
\newtheorem{corollary}[theorem]{Corollary}
\newtheorem{lemma}[theorem]{Lemma}
\newtheorem{question}[theorem]{Question}
\theoremstyle{definition}
\newtheorem{remark}[theorem]{Remark}
\begin{document}
\author{Marco Boggi}
\address{Departamento de Matem\'atica, UFMG, Belo Horizonte - MG (Brasil)}
\email{marco.boggi@gmail.com}

\author{Eduard Looijenga}
\address{Yau Mathematical Sciences Center, Tsinghua University Beijing (China) and Mathematisch Instituut, Universiteit Utrecht (Nederland)}
\email{eduard@math.tsinghua.edu.cn}

\keywords{Canonical curve, Deformation inside a Quadric}

\subjclass[2010]{Primary 14H10,  Secondary  14B12}

\title{Deforming a canonical curve inside a quadric}
\begin{abstract}
Let $C\subset\PP^{g-1}$ be a canonically  embedded nonsingular nonhyperelliptic curve of genus $g$ and let  $X\subset\PP^{g-1}$ be a quadric containing $C$. Our main result states  among other things  that the Hilbert scheme of $X$  is at  $[C\subset X]$  a local complete intersection of dimension $g^2-1$, and is smooth when $X$ is.  It also includes the assertion that the  minimal obstruction space for this deformation problem is in fact the full associated $\ext^1$-group and that in particular the deformations of $C$ in $X$ are obstructed in case $C$ meets the singular locus of $X$.
As we will show in a forthcoming paper, this  has applications of a topological nature. 
\end{abstract}

\maketitle
\section{Statement of the main result}
Throughout this paper we work  over an algebraically closed field $\Bbbk$ of characteristic zero.
Let $C$ be a smooth nonhyperelliptic projective curve of genus $g$ (so that $g>2$) and 
regard $C$ as embedded in  $\PP:=\check{\PP}(H^0(C,\Omega_C))$. It is well-known that the Hilbert scheme of $\PP$ is smooth  at $C\subset \PP$
of dimension $3g-3+g^2-1(=\dim \cM_g+\dim \PGL_g)$ and that the canonical embedding is unique modulo the action of the projective linear group $\aut(\PP)$ of $\PP$. It is also known that when $g\ge 4$, $C$ is contained in a quadric hypersurface. Let $X\subset\PP$ be one such quadric.  Our main theorem helps us to understand what conditions are imposed on the  deformation theory of the $C$ in $\PP$ by requiring that $C$ stays inside $X$. 

In order to state it we use the following notions and notation. For a variety $Z$, $\cT_Z$ stands for its Zariski tangent sheaf, i.e., the $\cO_Z$-dual of $\Omega_Z$. If $Y\subseteq Z$ is a closed subscheme defined by the $\cO_Z$-ideal $\cI_Y\subseteq \cO_Z$, then 
$\cC_{Y/Z}:=\cI_Y/\cI_Y^2$ (regarded as an $\cO_Z$-module)  is the \emph{conormal sheaf of $Y$ in $Z$},  and  its $\cO_Z$-dual, denoted $\cN_{Y/Z}$, is the \emph{normal sheaf  of $Y$ in $Z$}. 

\begin{theorem}\label{obstruction2}
Let $C\subset X\subset \PP$ be as above (so $C$ nonhyperelliptic of genus $g\ge 4$). Then:
\begin{enumerate}
\item the Hilbert scheme $\hilb(X)$ is a local complete intersection at $[C\subset X]$ of dimension $g^2-1$ with embedding dimension
$g^2-1+\dim\ext^1_C(\cC_{C/X},\cO_C)$,
\item $\ext^1_C(\cC_{C/X},\cO_C)$ is a minimal obstruction space for deformations of $C$ in $X$ and  
\item when $X$ is nonsingular, $\ext^1_C(\cC_{C/X},\cO_C)=0$ and $\hilb(X)$ is smooth at $[C\subset X]$.
\end{enumerate}
\end{theorem}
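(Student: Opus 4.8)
\emph{Proof proposal.} The plan is to combine a short exact-sequence computation of the tangent and obstruction spaces with a transversality statement for $\hilb(X)$ inside $\hilb(\PP)$. Fix a quadratic form $Q$ defining $X$, and let $\pi\co\mathcal C\to\mathsf U$ be the universal curve over a neighbourhood $\mathsf U$ of $[C]$ in $\hilb(\PP)$ (smooth of dimension $g^2+3g-4$). Since $\operatorname{Sing}(X)$ is a proper linear subspace of $\PP$ and $C$ is nondegenerate, $C\not\subset\operatorname{Sing}(X)$, so the natural map $\cC_{X/\PP}|_C=\cO_C(-2)\to\cC_{C/\PP}$ (sending $1$ to $dQ|_C$) is a nonzero morphism from a line bundle into a locally free sheaf on the integral curve $C$, hence injective; the conormal sequence of $C\subset X\subset\PP$ then reads
\[0\to\cO_C(-2)\xrightarrow{\,dQ|_C\,}\cC_{C/\PP}\to\cC_{C/X}\to 0 .\]
Applying $\hom_C(-,\cO_C)$ and using $\ext^1_C(\cC_{C/\PP},\cO_C)=H^1(C,\cN_{C/\PP})=0$ (smoothness of $\hilb(\PP)$ at $[C]$) and $\ext^1_C(\cO_C(-2),\cO_C)=H^1(C,\Omega_C^{\otimes 2})=0$ gives
\[0\to\hom_C(\cC_{C/X},\cO_C)\to H^0(C,\cN_{C/\PP})\xrightarrow{\,\alpha\,}H^0(C,\Omega_C^{\otimes 2})\to\ext^1_C(\cC_{C/X},\cO_C)\to 0 ,\]
with $\alpha$ dual to $dQ|_C$. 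As $\hom_C(\cC_{C/X},\cO_C)$ is the Zariski tangent space of $\hilb(X)$ at $[C\subset X]$, and $\dim H^0(C,\cN_{C/\PP})=g^2+3g-4$, $\dim H^0(C,\Omega_C^{\otimes 2})=3g-3$, this already yields the embedding dimension $g^2-1+\dim\ext^1_C(\cC_{C/X},\cO_C)$, and shows $\ext^1_C(\cC_{C/X},\cO_C)=\coker\alpha$.

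Next I would use that $h^0(C',\cO_{C'}(2))=3g-3$ is constant on $\mathsf U$, so $\pi_*\cO_{\mathcal C}(2)$ is locally free of rank $3g-3$, and near $[C\subset X]$ one has $\hilb(X)=Z(s_Q)$, the zero scheme of the section $s_Q\co[C']\mapsto Q|_{C'}$ (whose derivative at $[C]$ is $\alpha$). Hence $\hilb(X)$ is locally cut out in the regular local ring $\cO_{\hilb(\PP),[C]}$ by $3g-3$ equations, so every component through $[C\subset X]$ has dimension $\ge g^2-1$. Granting the reverse inequality $\dim_{[C\subset X]}\hilb(X)\le g^2-1$, these equations form a regular sequence, so $\hilb(X)$ is a local complete intersection of dimension $g^2-1$ at $[C\subset X]$, of embedding dimension as in the first paragraph; this is (i). Then (ii) follows: $\ext^1_C(\cC_{C/X},\cO_C)$ is an obstruction space for the embedded deformations of $C$ in $X$ (Grothendieck's theory of $\hilb$), and for a local complete intersection the dimension of a minimal obstruction space equals (embedding dimension) $-$ (dimension) $=\dim\ext^1_C(\cC_{C/X},\cO_C)$, so this obstruction space is minimal.

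For (iii), and the nonsingular case of (i)–(ii): the subspace $\mathfrak{pgl}_g=H^0(\PP,\cT_\PP)$ injects into $H^0(C,\cN_{C/\PP})$ (the $\aut(\PP)$-orbit of $[C]$ is smooth of dimension $g^2-1$), and $\alpha(\xi)=(\xi\cdot Q)|_C$ for $\xi\in\mathfrak{pgl}_g$, the restriction to $C$ of the infinitesimal action of $\xi$ on $Q\in H^0(\PP,\cO_\PP(2))=\sym^2 H^0(C,\Omega_C)$. If $X$ is nonsingular, $Q$ is a nondegenerate quadratic form, so its $\GL_g$-orbit in $\sym^2 H^0(C,\Omega_C)$ is open and $\{\xi\cdot Q:\xi\in\mathfrak{gl}_g\}=H^0(\PP,\cO_\PP(2))$; composing with the surjection $\sym^2 H^0(C,\Omega_C)\to H^0(C,\Omega_C^{\otimes 2})$ (Noether's theorem), $\alpha$ is onto. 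Thus $\ext^1_C(\cC_{C/X},\cO_C)=\coker\alpha=0$ and the embedding dimension is $g^2-1$; together with the lower bound from the previous paragraph this forces $\hilb(X)$ to be smooth of dimension $g^2-1$ at $[C\subset X]$. This proves (iii) and finishes (i)–(ii) for nonsingular $X$.

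The remaining point — and the crux — is $\dim_{[C\subset X]}\hilb(X)\le g^2-1$ when $X$ is singular, of rank $r<g$. I would project $\hilb(X)\to\cM_g$, $[C']\mapsto[C']$: the fibre over $[C']$ is, up to the finite group $\aut(C')$, the locus $\{\phi\in\PGL_g:\phi^{-1}(X)\in|\cI_{C'}(2)|\}$, which surjects onto $\{Q'\in|\cI_{C'}(2)|:\operatorname{rk}Q'=r\}$ with fibres isomorphic to the stabiliser $\aut_\PP(X)\subset\PGL_g$ of $X$. Since $\dim\aut_\PP(X)=\binom g2+\delta$ where $\delta:=\binom{g-r+1}2$ is the codimension of $\{\operatorname{rk}\le r\}$ in $\PP(\sym^2 H^0(C,\Omega_C))$, and $\dim|\cI_{C'}(2)|=\binom{g-2}2-1$, a crude count yields only $\dim\hilb(X)\le g^2-1+\delta$. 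What is needed is that for a general $[C']$ in the image the linear system $|\cI_{C'}(2)|$ meets $\{\operatorname{rk}\le r\}$ in the \emph{expected} codimension $\delta$ — equivalently, that $Z(s_Q)$ has the expected codimension near $[C]$ — which sharpens the bound to $(3g-3)+\binom{g-2}2-1+\binom g2=g^2-1$, as desired. Establishing this expected-dimension statement is the technical heart; I expect it to require a detailed analysis of the configuration $C\cap\operatorname{Sing}(X)$, which is exactly the support of the torsion of $\cC_{C/X}$ and hence of the obstructions, and to be the step least accessible to a soft argument.
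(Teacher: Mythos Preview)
Your derivation of the conormal exact sequence, the four-term sequence for $\alpha$, the embedding-dimension formula, and the argument for (iii) via nondegeneracy of $Q$ together with Noether's theorem are all correct and essentially match the paper's Lemmas~\ref{lemma:L1}, \ref{lemma:sheafD} and Corollary~\ref{cor:van}. Realising $\hilb(X)$ locally as the zero locus $Z(s_Q)$ of a section of a rank $3g-3$ bundle on the smooth scheme $\hilb^\circ(\PP)$ is a perfectly good way to get the lower bound $\dim_{[C\subset X]}\hilb(X)\ge g^2-1$ and to conclude the local complete intersection property \emph{once} the dimension is known to be exactly $g^2-1$; your deduction of (ii) from (i) is also fine.

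The genuine gap is precisely where you locate it: the upper bound $\dim_{[C\subset X]}\hilb(X)\le g^2-1$ when $X$ is singular. Your proposed route---showing that $|\cI_{C'}(2)|$ meets the rank-$\le r$ locus in the expected codimension $\delta$ for general $C'$---is left unproven, and your own reformulation ``$Z(s_Q)$ has expected codimension near $[C]$'' is a restatement of the goal rather than a method; there is no evident reason this should follow from a local study of $C\cap\operatorname{Sing}(X)$. The paper bypasses this entirely by \emph{varying the quadric}. Over the open set $S\subset\PP(\sym^2 H^0(C,\Omega_C))$ of quadrics of rank $\ge 3$ one forms the relative Hilbert scheme $p\co\hilb^\circ({\mathcal U}_S/S)\to S$, whose fibre over $[X']$ parametrises smooth canonical curves in $X'$. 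The key observation is that the \emph{other} projection $\hilb^\circ({\mathcal U}_S/S)\to\hilb^\circ(\PP)$ has as fibre over $[C']$ the linear system $|\cI_{C'}(2)|$, a projective space; hence the total space is irreducible. Since a general canonical curve lies on a nonsingular quadric (Petri plus Bertini for $g\ge 5$, the classical quadric--cubic description for $g=4$), part (iii) makes $p$ generically smooth of relative dimension $g^2-1$, so the irreducible total space has dimension $g^2-1+\dim S$ at \emph{every} point, in particular at $[C\subset X]$. Feeding this together with the identity $h^0(C,\cN_{C/X})=g^2-1+\dim\ext^1_C(\cC_{C/X},\cO_C)$ into Koll\'ar's criterion (Theorem~2.15 of \emph{Rational Curves on Algebraic Varieties}) turns the basic inequality into an equality and forces $p$ to be syntomic at $[C\subset X]$; its fibre $\hilb(X)$ is then a local complete intersection of dimension $g^2-1$, and (i) and (ii) follow as you argued.
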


We shall also   show (Corollary \ref{singular}) that when the quadric $X$ is singular and $C$ meets its singular locus, the obstruction space 
$\ext^1_C(\cC_{C/X},\cO_C)$ is nonzero.
\\

\emph{Acknowledgement.} We thank Eduardo Sernesi  for a helpful conversation with one of us.
E.L.\ was supported by the Chinese National Science Foundation.

\section{Proof of the theorem}
Let us  write $\autfrak(\PP)$ for the Lie algebra of the projective linear group $\aut (\PP)$ of $\PP$, in other words, the Lie algebra of vector fields on $\PP$.  We first observe:

\begin{lemma}\label{lemma:L1}
The natural map $\autfrak(\PP)\to H^0(C,  \cO_C\otimes \cT_\PP)$ is an
isomorphism, and the cohomology spaces $H^1(C, \cO_C\otimes \cT_\PP)$,  $\ext^1_C(\cC_{C/\PP},\cO_C)= H^1(C,\cN_{C/\PP})$ all vanish (so that the first order deformations of $C$ in $\PP$ are unobstructed). Furthermore,  the sequence
\[
0\to \autfrak(\PP)\to H^0(C,\cN_{C/\PP})\to H^1(C, \cT_C)\to 0
\]
is exact, where the middle term can be regarded as the tangent space of  $\hilb (\PP)$ at $[C]$  (and has  dimension  $(g^2-1) +(3g-3)$). 
\end{lemma}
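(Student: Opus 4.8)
The plan is to read every assertion off the Euler sequence of $\PP$ restricted to $C$, the single non-formal ingredient being the identification of one connecting homomorphism via Serre duality. Since $C\subset\PP$ is the canonical embedding, $\cO_\PP(1)|_C\cong\Omega_C$ and restriction induces an isomorphism $H^0(\PP,\cO_\PP(1))\xrightarrow{\sim}H^0(C,\Omega_C)$. Tensoring the Euler sequence
\[
0\to\cO_\PP\to\cO_\PP(1)\otimes H^0(\PP,\cO_\PP(1))^\vee\to\cT_\PP\to 0
\]
with $\cO_C$ — which stays left exact because $\cT_\PP$ is locally free — gives an exact sequence
\[
0\to\cO_C\to\Omega_C\otimes H^0(C,\Omega_C)^\vee\to\cO_C\otimes\cT_\PP\to 0 .
\]
Comparing its cohomology with that of the Euler sequence on $\PP$ itself, and using that $H^0(\PP,\cO_\PP)\to H^0(C,\cO_C)$ and $H^0(\PP,\cO_\PP(1))\to H^0(C,\Omega_C)$ are isomorphisms while $H^1(\PP,\cO_\PP)=0$, the five lemma yields that $\autfrak(\PP)=H^0(\PP,\cT_\PP)\to H^0(C,\cO_C\otimes\cT_\PP)$ is an isomorphism; this settles the first claim.

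Next I would run the long exact cohomology sequence of the displayed short exact sequence on $C$. Here $H^i(\cO_C)$ and $H^i\big(\Omega_C\otimes H^0(C,\Omega_C)^\vee\big)$ are all standard, and the crux is that the connecting map $H^1(C,\cO_C)\to H^1(C,\Omega_C)\otimes H^0(C,\Omega_C)^\vee$ is an isomorphism: component by component it is cup product with a section $\eta\in H^0(C,\Omega_C)$, and under the Serre-duality identifications $H^1(C,\cO_C)\cong H^0(C,\Omega_C)^\vee$, $H^1(C,\Omega_C)\cong\Bbbk$ this is exactly the evaluation pairing, so letting $\eta$ run through a basis gives an isomorphism. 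Consequently $H^1(C,\cO_C\otimes\cT_\PP)=0$ and $\dim H^0(C,\cO_C\otimes\cT_\PP)=g^2-1$, consistent with the first claim. Since $C$ and $\PP$ are smooth, $\cC_{C/\PP}$ is locally free, so the local-to-global $\ext$ spectral sequence gives $\ext^1_C(\cC_{C/\PP},\cO_C)=H^1(C,\cN_{C/\PP})$; plugging $H^1(C,\cO_C\otimes\cT_\PP)=0$ together with $H^2(C,\cT_C)=0$ into the cohomology sequence of the normal bundle sequence $0\to\cT_C\to\cO_C\otimes\cT_\PP\to\cN_{C/\PP}\to 0$ then forces $H^1(C,\cN_{C/\PP})=0$, and unobstructedness of the first-order deformations of $C$ in $\PP$ is immediate.

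Finally, the same normal bundle sequence, together with $H^0(C,\cT_C)=0$ (as $\deg\cT_C=2-2g<0$) and the vanishing $H^1(C,\cO_C\otimes\cT_\PP)=0$ from the previous step, yields the short exact sequence
\[
0\to H^0(C,\cO_C\otimes\cT_\PP)\to H^0(C,\cN_{C/\PP})\to H^1(C,\cT_C)\to 0 ,
\]
whose left-hand term is $\autfrak(\PP)$ by the first step. That $H^0(C,\cN_{C/\PP})=\hom_C(\cC_{C/\PP},\cO_C)$ is the Zariski tangent space of $\hilb(\PP)$ at $[C]$ is the standard description, and the dimension $(g^2-1)+(3g-3)$ drops out from $\dim\autfrak(\PP)=g^2-1$ and $\dim H^1(C,\cT_C)=3g-3$ (Riemann--Roch). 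I expect the only genuinely delicate point to be the Serre-duality computation of the connecting map in the middle step; everything else is diagram chasing and bookkeeping.
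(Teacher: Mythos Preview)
Your argument is essentially the paper's: restrict the Euler sequence to $C$, use Serre duality to identify the map $H^1(\cO_C)\to H^1(\Omega_C)\otimes H^0(\Omega_C)^\vee$ as an isomorphism, read off both $\autfrak(\PP)\cong H^0(C,\cO_C\otimes\cT_\PP)$ and $H^1(C,\cO_C\otimes\cT_\PP)=0$, and finish with the normal bundle sequence.

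One slip in presentation: the five-lemma step as you invoke it only yields \emph{injectivity} of $\autfrak(\PP)\to H^0(C,\cO_C\otimes\cT_\PP)$, because the fourth vertical arrow $H^1(\PP,\cO_\PP)=0\to H^1(C,\cO_C)$ is not surjective. Surjectivity is precisely what your Serre-duality computation in the next paragraph supplies (it shows the connecting map $H^0(\cO_C\otimes\cT_\PP)\to H^1(\cO_C)$ vanishes, or equivalently gives the dimension count $g^2-1$), so the proof is fine once reorganised. The paper simply runs the long exact sequence on $C$ directly, uses the Serre-duality identification at once, and skips the five-lemma detour.
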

\begin{proof}
The Euler sequence, which describes the tangent sheaf $\cT_\PP$ of the projective space $\PP=\check\PP(H^0(C,\Omega_C))$, tensored with $\cO_C$ is 
\[
0\to\cO_C\to \Omega_C\otimes H^0(C,\Omega_C)^\ast\to  \cO_C\otimes \cT_\PP\to 0.
\]
Consider its  cohomology exact sequence:
\begin{multline*}
0\to H^0(C, \cO_C)\to \End (H^0(C,\Omega_C))\to H^0(C, \cO_C\otimes \cT_\PP)\to\\
\to H^1(C, \cO_C)\to \hom (H^0(C,\Omega_C), H^1(C,\Omega_C))\to H^1(C,  \cO_C\otimes \cT_\PP)\to 0.
\end{multline*}
The first nonzero map of the first line is just the inclusion of the scalars in $\End (H^0(C,\Omega_C))$  (whose cokernel is $\autfrak(\PP)$) and the 
first map of the second line is readily verified to be the isomorphism provided by (Serre) duality. So $\autfrak(\PP)\to H^0(C,  \cO_C\otimes \cT_\PP)$ is an isomorphism and  $H^1(C,  \cO_C\otimes \cT_\PP)=0$. If we use the last observation as input for the exact cohomology sequence of  the short exact sequence
\[
0\to \cT_C\to \cO_C\otimes \cT_\PP\to \cN_{C/\PP}\to 0,
\]
we obtain the stated exact sequence  (where we use that $H^0(C, \cT_C)=0$).
\end{proof}

\begin{lemma}\label{lemma:sheafD}
We have a natural isomorphism  $\cO_C\otimes \cN_{X/\PP}\cong \Omega^{\otimes 2}_C$ and an exact sequence 
\[0\to H^0(C,\cN_{C/X})\to H^0(C,\cN_{C/\PP})\xrightarrow{\phi_X} H^0(C, \Omega^{\otimes 2}_C)\to \ext_C^1(\cC_{C/X}, \cO_C)\to 0.
\]
\end{lemma}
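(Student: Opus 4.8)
The plan is to identify both normal bundles explicitly and then to apply $\hom_C(-,\cO_C)$ to the conormal exact sequence of the chain $C\subset X\subset\PP$, feeding in the vanishing from Lemma \ref{lemma:L1}. For the first assertion: $X$ is a quadric hypersurface, so $\cI_X\cong\cO_\PP(-2)$ as an $\cO_\PP$-module, whence $\cC_{X/\PP}\cong\cO_X(-2)$ and $\cN_{X/\PP}\cong\cO_X(2)$. Restricting to $C$ and using that $C\hookrightarrow\PP$ is the canonical embedding, so that $\cO_\PP(1)|_C\cong\Omega_C$, one gets the asserted isomorphism $\cO_C\otimes\cN_{X/\PP}\cong\Omega^{\otimes 2}_C$, and dually $\cC_{X/\PP}|_C\cong\Omega^{\otimes -2}_C$.

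For the exact sequence, I would start from the (right-exact) conormal sequence of $C\subset X\subset\PP$, namely $\cC_{X/\PP}|_C\to\cC_{C/\PP}\to\cC_{C/X}\to 0$, and first upgrade it to a short exact sequence
\[
0\to\Omega^{\otimes -2}_C\to\cC_{C/\PP}\to\cC_{C/X}\to 0
\]
by checking that the left-hand map is injective. Since its source is a line bundle on the integral curve $C$, it suffices to verify injectivity at the generic point $\eta$ of $C$; and $\eta$ lies in the smooth locus of $X$, because the nondegenerate curve $C$ cannot be contained in the singular locus of $X$, which is a proper linear subspace of $\PP$. Near $\eta$ both $C\hookrightarrow X$ and the hypersurface $X\hookrightarrow\PP$ are then regular immersions, so the conormal sequence is exact on the left there, and injectivity follows.

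Finally, I would apply $\hom_C(-,\cO_C)$ to this short exact sequence and read off the long exact sequence of global $\ext$-groups, whose first terms are
\[
0\to\hom_C(\cC_{C/X},\cO_C)\to\hom_C(\cC_{C/\PP},\cO_C)\xrightarrow{\ \phi_X\ }\hom_C(\Omega^{\otimes -2}_C,\cO_C)\to\ext^1_C(\cC_{C/X},\cO_C)\to\ext^1_C(\cC_{C/\PP},\cO_C).
\]
By the first part the three Hom-groups are $H^0(C,\cN_{C/X})$, $H^0(C,\cN_{C/\PP})$ and $H^0(C,\Omega^{\otimes 2}_C)$ respectively; and since $\cC_{C/\PP}$ is locally free, the local-to-global spectral sequence degenerates, so the last term is $H^1(C,\cN_{C/\PP})$, which vanishes by Lemma \ref{lemma:L1}. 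This yields exactly the four-term sequence claimed. (It stops there: $\ext^1_C(\Omega^{\otimes -2}_C,\cO_C)=H^1(C,\Omega^{\otimes 2}_C)=0$ by Serre duality, as $\deg\Omega^{\otimes 2}_C=4g-4>2g-2$.) The only step calling for genuine care is the injectivity claim in the middle paragraph — this is where nondegeneracy of the canonical curve enters — together with keeping the distinction between sheaf $\ext$ and global $\ext$ straight; the rest is formal.
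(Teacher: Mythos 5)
Your proof is correct and follows essentially the same route as the paper: identify $\cC_{X/\PP}|_C$ via $\cO_X(-2)$ and the canonical polarization, upgrade the conormal sequence of $C\subset X\subset\PP$ to a short exact sequence with a line bundle on the left, and apply $\hom_C(-,\cO_C)$ together with the vanishing of $H^1(C,\cN_{C/\PP})$ from Lemma~\ref{lemma:L1}. The only (immaterial) difference is how injectivity of the left-hand map is checked: you verify it at the generic point of $C$ using that $C\not\subseteq\mathrm{Sing}(X)$, whereas the paper observes that the image $\ker p\subseteq\cC_{C/\PP}$ is torsion free of rank one, hence invertible, so the surjection from the invertible sheaf $(\Omega_C^{\otimes 2})^\vee$ onto it must be an isomorphism.
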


\begin{proof}
Consider the standard exact sequence of conormal sheaves associated to 
the chain of embeddings $C\subset X\subset\PP$:
\[
\cO_C\otimes\cC_{X/\PP}\to \cC_{C/\PP}\sr{p}{\to}\cC_{C/X}\to 0.
\]
Since $X$ is a hypersurface in $\PP$ of degree 2, the embedding 
$X\subset\PP$ is regular with conormal sheaf $\cC_{X/\PP}$ isomorphic to $\cO_X(-2)$. Recalling that  $\cO_C(1)=\Omega_C$, this yields an isomorphism $\cO_C\otimes \cC_{X/\PP}\cong (\Omega_C^{\otimes 2})^\vee$. 
Now $\cC_{C/\PP}$ is  the conormal sheaf of a regular embedding and so locally free. 
It follows that $\ker p$ is torsion free  and (hence)  locally free of rank one. So  $\cO_C\otimes \cC_{X/\PP}\cong (\Omega_C^{\otimes 2})^\vee\to\ker p$, being a surjective morphism of invertible sheaves, must be  an isomorphism. 
We thus obtain a locally free resolution of $\cC_{C/X}$:
\[
0\to (\Omega_C^{\otimes 2})^\vee\to\cC_{C/\PP}\sr{p}{\to}\cC_{C/X}\to 0.
\]
Applying  $\hom_C(-,\cO_C)$ to this resolution, gives the exact sequence
\begin{gather*}
0\to H^0(C,\cN_{C/X})\to H^0(C,\cN_{C/\PP})\to H^0(C, \Omega^{\otimes 2}_C)\to \ext_C^1(\cC_{C/X}, \cO_C)\to 
H^1(C, \cN_{C/\PP}).
\end{gather*}
The exact sequence of the lemma now follows from the vanishing of $H^1(C, \cN_{C/\PP})$. 
\end{proof}

\begin{corollary}[=Part (iii) of Theorem \ref{obstruction2}]\label{cor:van}
Assume that $X$ is nonsingular. Then the cohomology spaces $H^1(C,\cO_C\otimes\cT_{X})$ and $\ext_C^1(\cC_{C/X}, \cO_C)$ vanish.
In particular, the deformations of $C$ in $X$ are unobstructed and $\hilb(X)$ is nonsingular of dimension 
$\dim H^0(C,\cN_{C/\PP})-\dim H^0(C, \Omega^{\otimes 2}_C)=g^2-1$ at $[C\subset X]$. 
\end{corollary}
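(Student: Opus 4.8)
The strategy is to reduce both vanishings to a single surjectivity statement, which turns out to be exactly Max Noether's theorem on the projective normality of a nonhyperelliptic canonical curve. Since $X$ is smooth, the embedding $C\subset X$ is regular, so $\cC_{C/X}$ is locally free (of rank $g-3$) and $\ext^1_C(\cC_{C/X},\cO_C)=H^1(C,\cN_{C/X})$; and the normal bundle sequence $0\to\cT_X\to\cO_X\otimes\cT_\PP\to\cN_{X/\PP}\to 0$ is a short exact sequence of \emph{locally free} $\cO_X$-modules, so tensoring with $\cO_C$ leaves it exact. Combined with the isomorphism $\cO_C\otimes\cN_{X/\PP}\cong\Omega_C^{\otimes 2}$ of Lemma \ref{lemma:sheafD}, this gives a short exact sequence
\[
0\to \cO_C\otimes\cT_X\to \cO_C\otimes\cT_\PP\xrightarrow{\;\psi\;}\Omega_C^{\otimes 2}\to 0 .
\]
Taking cohomology and using $H^1(C,\cO_C\otimes\cT_\PP)=0$ (Lemma \ref{lemma:L1}) identifies $H^1(C,\cO_C\otimes\cT_X)$ with the cokernel of $H^0(\psi)\colon H^0(C,\cO_C\otimes\cT_\PP)\to H^0(C,\Omega_C^{\otimes 2})$, so everything comes down to the surjectivity of $H^0(\psi)$.

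For that, I would identify the source with $\autfrak(\PP)$ (Lemma \ref{lemma:L1}) and describe $H^0(\psi)$ explicitly. If $Q\in\sym^2H^0(C,\Omega_C)=H^0(\PP,\cO_\PP(2))$ is the quadratic form cutting out $X$, then $H^0(\psi)$ sends a vector field $v$ to the restriction to $C$ of its Lie derivative $v\cdot Q$ taken modulo $Q$; hence it factors as
\[
\autfrak(\PP)\longrightarrow H^0(X,\cN_{X/\PP})\longrightarrow H^0(C,\Omega_C^{\otimes 2}) .
\]
The first arrow is surjective because $Q$ is nondegenerate: its $\GL\big(H^0(C,\Omega_C)\big)$-orbit is Zariski-open in $\sym^2H^0(C,\Omega_C)$, so the tangent directions $v\cdot Q$ already span $\sym^2H^0(C,\Omega_C)$ and thus surject onto the quotient $\sym^2H^0(C,\Omega_C)/\Bbbk Q = H^0(X,\cN_{X/\PP})$. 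The second arrow is the restriction map $H^0(X,\cO_X(2))\to H^0(C,\Omega_C^{\otimes 2})$; it is induced by the multiplication $\sym^2H^0(C,\Omega_C)\to H^0(C,\Omega_C^{\otimes 2})$, which is surjective by Max Noether's theorem precisely because $C$ is nonhyperelliptic (of genus $\ge 4$). Recognizing that $H^0(\psi)$ reduces to this multiplication map---and so locating where the hypotheses really enter---is the conceptual heart of the proof; the rest is formal. We conclude that $H^0(\psi)$ is onto and $H^1(C,\cO_C\otimes\cT_X)=0$.

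Finally, the exact sequence $0\to\cT_C\to\cO_C\otimes\cT_X\to\cN_{C/X}\to 0$ yields $H^1(C,\cO_C\otimes\cT_X)\to H^1(C,\cN_{C/X})\to H^2(C,\cT_C)=0$, so $\ext^1_C(\cC_{C/X},\cO_C)=H^1(C,\cN_{C/X})=0$; hence the deformations of $C$ in $X$ are unobstructed and $\hilb(X)$ is smooth at $[C\subset X]$ of dimension $\dim H^0(C,\cN_{C/X})$. Feeding $\ext^1_C(\cC_{C/X},\cO_C)=0$ into the exact sequence of Lemma \ref{lemma:sheafD} turns it into $0\to H^0(C,\cN_{C/X})\to H^0(C,\cN_{C/\PP})\to H^0(C,\Omega_C^{\otimes 2})\to 0$, so with the dimensions recorded in Lemma \ref{lemma:L1} we get $\dim H^0(C,\cN_{C/X})=\big((g^2-1)+(3g-3)\big)-(3g-3)=g^2-1$.
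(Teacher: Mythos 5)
Your proof is correct and is essentially the paper's argument: both reduce everything to the surjectivity of $\autfrak(\PP)\to H^0(C,\Omega_C^{\otimes 2})$ via the factorization through $\sym^2 H^0(C,\Omega_C)$, using the nondegeneracy of $Q$ for the first step and Max Noether for the second. The only difference is organizational --- you route the vanishing of $\ext^1_C(\cC_{C/X},\cO_C)$ through $H^1(C,\cO_C\otimes\cT_X)=0$ and the local freeness of $\cC_{C/X}$, whereas the paper reads it off directly from the four-term exact sequence of Lemma \ref{lemma:sheafD} by showing $\phi_X|_{\autfrak(\PP)}$ is onto.
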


\begin{proof}
We prove that the restriction of  $\phi_X$ to the subspace $\autfrak(\PP)\subset  H^0(C,\cN_{C/\PP})$ is onto;
this will clearly imply the corollary. This map is defined as follows: regard $A\in \autfrak(\PP)$ as a vector field
on $\PP$, restrict it to $X$ so that we get a normal vector field to $X$ in $\PP$ and then restrict this normal vector field
to $C$ and take its image in $H^0(C, \cO_C\otimes \cN_{X/\PP})\cong H^0(C, \Omega^{\otimes 2}_C)$.
This map factors through $\sym^2 H^0(C,\Omega_C)\to H^0(C,\Omega^{\otimes 2}_C)$: if we identify $\autfrak(\PP)$ with 
$\slfrak(H^0(C, \Omega_C))\subset \End (H^0(C, \Omega_C))$, then the lift in question is given by 
\[
 A\in \End (H^0(C, \Omega_C))\mapsto  (A\otimes 1 +1\otimes A)(Q)\in \sym^2 H^0(C, \Omega_C).
\]
Since $Q$ is nonsingular, the above map is onto and following Max Noether, the same is true for the map 
$\sym^2 H^0(C,\Omega_C)\to H^0(C,\Omega^{\otimes 2}_C)$. Now $\End (H^0(C, \Omega_C))$ is the direct sum of $\slfrak(H^0(C, \Omega_C))$ 
and the scalars. But the scalars map under the above map to the multiples of $Q$ and hence vanish when restricted to $C$. It follows that 
$\phi_X|\autfrak(\PP)$ is onto as asserted.
\end{proof}

We next consider the general case. An obstruction theory for the embedding $C\subset X$ is given by the vector space $\ext^1_C(\cC_{C/X},\cO_C)$ 
(cf.\ Proposition~2.14 in \cite{Kollar}), but it is not always true that  this space consists entirely of obstructions, i.e., 
is a  \emph{minimal obstruction space} in the sense of Definition~5.5 in \cite{T-V}. Our main theorem states however that in our situation this is so. In order to show this, we need some preliminary results on Hilbert schemes of canonical curves in quadrics.
The Hilbert scheme of quadrics in $\PP$ is naturally identified with $\PP(\sym^2 H^0(C, \Omega_C))$. It comes with a universal family of quadrics:
\[
\begin{array}{ccc}
{\mathcal U}&\subset&\PP\times\PP(\sym^2 H^0(C, \Omega_C))\\
&\searrow&\da\\
&&\PP(\sym^2 H^0(C, \Omega_C)).
\end{array}
\]
Let $S\subset\PP(\sym^2 H^0(C, \Omega_C))$ be the open subscheme parameterizing quadrics of rank $\geq 3$ and ${\mathcal U}_S\to S$ the restriction of the universal 
family over $S$. Denote by  $\hilb^\circ({\mathcal U}_S/S)$ the subscheme of the relative Hilbert scheme $\hilb({\mathcal U}_S/S)$ 
whose closed points parameterize the pairs consisting of  a nonsingular canonically embedded genus $g$ curve in $\PP$  and 
a quadric of rank $\geq 3$ in $\PP$  containing that curve. Note that $\hilb^\circ({\mathcal U}_S/S)$ is an open subscheme of $\hilb({\mathcal U}_S/S)$ and  which comes with a surjective morphism $p\co\hilb^\circ({\mathcal U}_S/S)\to S$.

\begin{lemma}\label{lemma:syntomic}
The scheme $\hilb^\circ({\mathcal U}_S/S)$ is irreducible. Moreover, $p\co\hilb^\circ({\mathcal U}_S/S)\to S$ is a syntomic 
(i.e., a flat local complete intersection) morphism of relative dimension $g^2-1$ that is generically smooth. In particular, $\hilb^\circ({\mathcal U}_S/S)$ 
is of dimension $\dim(S)+ g^2-1$. 
\end{lemma}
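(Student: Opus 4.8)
The plan is to realize $\hilb^\circ({\mathcal U}_S/S)$ as a projective bundle over the Hilbert scheme of canonical curves in $\PP$ and to deduce everything from that description together with Corollary \ref{cor:van}. Write $N\subseteq\hilb(\PP)$ for the open locus of smooth nonhyperelliptic canonically embedded genus $g$ curves in $\PP$; by Lemma \ref{lemma:L1} it is smooth of dimension $3g-3+g^2-1$, and it is irreducible since $\cM_g$ is irreducible and the canonical embedding is unique up to $\aut(\PP)$. Let $\pi\co\PP\times N\to N$ be the projection and $\mathscr C\subseteq\PP\times N$ the universal curve. For each $[C']\in N$, Max Noether's theorem makes $\sym^2 H^0(C,\Omega_C)=H^0(\PP,\cO_\PP(2))\to H^0(C',\Omega^{\otimes 2}_{C'})$ surjective, so $H^1(\PP,\cI_{C'}(2))=0$ and $h^0(\PP,\cI_{C'}(2))=\binom{g+1}{2}-(3g-3)=\binom{g-2}{2}=:m$ is constant on $N$; by cohomology and base change $\mathcal K:=\pi_*\cI_{\mathscr C}(2)$ is locally free of rank $m$ and commutes with base change. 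Since a nondegenerate irreducible curve is contained in no union of two hyperplanes, every quadric through a curve of $N$ has rank $\ge 3$, so $\PP(\mathcal K)\to N$ is a $\PP^{m-1}$-bundle whose points are precisely the pairs (smooth canonical genus $g$ curve $C'$, quadric of rank $\ge 3$ containing $C'$); that is, $\hilb^\circ({\mathcal U}_S/S)=\PP(\mathcal K)$, and $p$ is the morphism to $S\subseteq\PP(\sym^2 H^0(C,\Omega_C))$ classifying the tautological sub-line bundle.

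From this description $\hilb^\circ({\mathcal U}_S/S)$ is smooth and irreducible of dimension $\dim N+(m-1)=(3g-3+g^2-1)+(\binom{g-2}{2}-1)$, and a direct computation shows this equals $(\binom{g+1}{2}-1)+(g^2-1)=\dim S+(g^2-1)$. As $p$ is surjective (hence dominant) and $\hilb^\circ({\mathcal U}_S/S)$ is reduced, in characteristic zero $p$ is generically smooth of relative dimension $\dim\hilb^\circ({\mathcal U}_S/S)-\dim S=g^2-1$; and since source and target are both smooth over $\Bbbk$, $p$ is automatically a local complete intersection morphism. Thus it remains only to prove that $p$ is flat, and by ``miracle flatness'' (source Cohen--Macaulay, target regular) this is equivalent to the statement that every fibre of $p$ has dimension exactly $g^2-1$. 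The fibre of $p$ over $[Q]\in S$ is the open locus $\hilb(X_Q)^\circ$ of smooth canonical curves contained in the quadric $X_Q$; here $\dim\hilb(X_Q)^\circ\ge g^2-1$ holds everywhere, by upper semicontinuity of fibre dimension (or because, by Lemma \ref{lemma:sheafD}, the tangent space $H^0(C',\cN_{C'/X_Q})$ has dimension $g^2-1+\dim\ext^1_{C'}(\cC_{C'/X_Q},\cO_{C'})$ while the $\ext^1$ is an obstruction space, cf.\ Proposition~2.14 of \cite{Kollar}).

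The crux is therefore the reverse inequality $\dim\hilb(X)^\circ\le g^2-1$ for every quadric $X$ of rank $\ge3$. I would reduce this using the $\aut(\PP)$-action: $S$ is the disjoint union of the locally closed strata $S_r$ of quadrics of rank exactly $r$ ($3\le r\le g$), each a single $\aut(\PP)$-orbit, and $p$ is equivariant, so $p^{-1}(S_r)\to S_r$ is a Zariski-locally trivial fibre bundle and it suffices to bound $\dim\hilb(X_r)^\circ$ for one quadric $X_r$ of each rank $r$. For $r=g$ this is exactly Corollary \ref{cor:van}. For $r<g$, realize $X_r$ as the cone with vertex $\Lambda=\operatorname{Sing}(X_r)$ over a smooth quadric $\bar X\subset\PP^{r-1}$; projection from $\Lambda$ (resolved by blowing up $\Lambda$) exhibits $X_r\setminus\Lambda$ as an affine bundle over $\bar X$ and sends a smooth canonical curve $C'\subset X_r$ to a nondegenerate curve in $\bar X$ together with lifting data. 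One then estimates separately the locus of curves disjoint from $\Lambda$ --- on which the deformations inside $X_r$ should again be unobstructed, so that the local dimension is $g^2-1$ --- and the locus of curves meeting $\Lambda$, where one must show the dimension does not jump. Carrying out this dimension count on cones is the main obstacle; once it is in place, $p$ is flat, hence syntomic of relative dimension $g^2-1$, and the dimension formula $\dim\hilb^\circ({\mathcal U}_S/S)=\dim S+g^2-1$ follows (confirming the count already made).
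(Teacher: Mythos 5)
Your reduction of the lemma to the single statement ``every fibre of $p$ has dimension at most $g^2-1$'' is correctly set up: the projective-bundle description of $\hilb^\circ({\mathcal U}_S/S)$ over the smooth irreducible locus of canonical curves, the resulting dimension count, generic smoothness in characteristic zero, the observation that a morphism between smooth varieties is automatically l.c.i., and the appeal to miracle flatness are all sound (and the bundle picture is essentially the paper's irreducibility argument made more precise). But you do not prove that remaining statement: you reduce it by $\aut(\PP)$-homogeneity to one quadric of each rank and then, for singular quadrics, only sketch a strategy whose two ingredients are both missing. Worse, the first ingredient --- that deformations of $C$ inside the cone $X_r$ are unobstructed when $C$ avoids the vertex --- is precisely the open Question at the end of the paper, so it cannot be taken for granted; and no argument at all is offered for the locus of curves meeting the vertex, which is exactly where Theorem \ref{obstruction2} says the deformations \emph{are} obstructed and where the tangent-space count gives $h^0(C,\cN_{C/X})=g^2-1+\dim\ext^1_C(\cC_{C/X},\cO_C)>g^2-1$, so that an upper bound on the local dimension of the fibre is genuinely delicate. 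The crux of the lemma is therefore left unproved.

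The paper avoids bounding fibre dimensions altogether. Having computed $\dim_{[C\subset X]}\hilb({\mathcal U}_S/S)=\dim S+g^2-1$ from the fibration over $\hilb^\circ(\PP)$ (your bundle description yields the same number), it invokes Koll\'ar's dimension estimate for relative Hilbert schemes (part (ii) of Theorem~2.15 in \cite{Kollar}): every component through $[C\subset X]$ has dimension at least $h^0(C,\cN_{C/X})+\dim S-\dim\ext^1_C(\cC_{C/X},\cO_C)$. The identity $h^0(C,\cN_{C/X})=g^2-1+\dim\ext^1_C(\cC_{C/X},\cO_C)$ from Lemma~\ref{lemma:sheafD} shows that this lower bound coincides with the actual dimension, and part (iv) of the same theorem then says that such an equality forces $p$ to be syntomic at $[C\subset X]$ --- flatness included --- with no need to control the fibres over singular quadrics directly. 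If you want to complete your route, this is the missing idea: replace miracle flatness by Koll\'ar's criterion, for which Lemma~\ref{lemma:sheafD} supplies exactly the required numerical input.
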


\begin{proof}Let $\hilb^\circ(\PP)$ be the subscheme of $\hilb(\PP)$ parameterizing nonsingular 
canonically embedded projective genus $g$ curves in $\PP$. It is well known that $\hilb^\circ(\PP)$ is a smooth open subscheme of 
an irreducible component of $\hilb(\PP)$. Since $\hilb(\PP\times S/S)\cong\hilb(\PP)\times S$, the product
$\hilb^\circ(\PP)\times S$ is identified with a smooth open subscheme of an irreducible component of $\hilb(\PP\times S/S)$.

The $S$-embedding ${\mathcal U}_S\subset\PP\times S$ induces a morphism 
\[
\hilb^\circ({\mathcal U}_S/S)\to\hilb^\circ(\PP)\times S\subset\hilb(\PP\times S/S).
\]
The morphism  $\pi\co\hilb^\circ({\mathcal U}_S/S)\to\hilb^\circ(\PP)$ obtained by  composition with the projection on $\hilb^\circ(\PP)$ 
is a surjection, the fiber of $p$ over a closed point $[C\subset\PP]\in\hilb(\PP)$ being 
naturally identified with the linear system of quadrics in $\PP$ containing $C$ (which consists of quadrics of rank $\geq 3$). Therefore it
is irreducible and hence $\hilb^\circ({\mathcal U}_S/S)$ is irreducible as well. 

We claim that a general nonsingular nonhyperelliptic canonically embedded 
curve $C\subset\PP$ of genus $\geq 4$ is contained in a nonsingular quadric. For $g=4$, this follows from the description of canonical curves in $\PP^3$ as complete intersections of a quadric with an irreducible cubic surface, whereas  Petri's and Bertini's theorems imply that for a nonsingular  projective curve $C$ of genus $\ge 5$, which is neither trigonal nor isomorphic to a plane quintic, the general member of the linear system of quadrics of $\PP$ containing $C$ is nonsingular: indeed, by Petri's Theorem, 
the linear system of quadrics containing $C$ has then for base locus the curve $C$ and is without fixed components; by Bertini's Theorem,
a general member of this linear system then has no singularities outside of C. But, then again by Petri's Theorem, quadrics
generate the canonical ideal and so a general member of this system has no singularities on $C$ either. Therefore a general quadric of $\PP$ containing $C$ is nonsingular.

By Corollary~\ref{cor:van} and Proposition 4.4.7 in \cite{Sernesi}, the restriction of the morphism $p$ over the locus of $S$ parameterizing 
nonsingular quadrics is therefore smooth of relative dimension $h^0(C,\cN_{C/X})=g^2-1$.

In order to complete the proof of the lemma, it is enough to show that, given a closed point $[X]\in S$ and a closed point 
$[C\subset X]\in\hilb^\circ({\mathcal U}_S/S)$ over it, the morphism $p\co\hilb^\circ({\mathcal U}_S/S)\to S$ is
syntomic at $[C\subset X]$. Since $\hilb^\circ({\mathcal U}_S/S)$ is an open subscheme of $\hilb({\mathcal U}_S/S)$, we have:
\begin{equation}
\dim_{[C\subset X]}\hilb({\mathcal U}_S/S)=g^2-1+\dim S.
\end{equation}
By (ii) of Theorem~2.15 in \cite{Kollar}, there is an inequality:
\begin{equation}
\dim_{[C\subset X]}\hilb({\mathcal U}_S/S)\geq h^0(C,\cN_{C/X})+\dim S-\dim\ext^1_C(\cC_{C/X},\cO_C).
\end{equation}
The exact sequence in Lemma~\ref{lemma:sheafD} gives the identity:
\begin{multline}
h^0(C,\cN_{C/X})=h^0(C,\cN_{C/\PP})-h^0(C,\Omega^{\otimes 2}_C)+\dim\ext^1_C(\cC_{C/X},\cO_C)=\\
=g^2-1+\dim\ext^1_C(\cC_{C/X},\cO_C).
\end{multline}
Combining the identities $(1)$ and $(3)$, we get that the inequality $(2)$ is actually an identity.
By (iv) of Theorem~2.15 in \cite{Kollar}, this implies that $p\co\hilb^\circ({\mathcal U}_S/S)\to S$ is a syntomic
morphism at $[C\subset X]$.
\end{proof}

\begin{proof}[Proof of Theorem \ref{obstruction2}] It remains to prove parts (i) and (ii). An open neighborhood of the point
$[C\subset X]$ in the Hilbert scheme $\hilb (X)$ is naturally isomorphic to the fiber of the morphism $p\co\hilb^\circ({\mathcal U}_S/S)\to S$ over 
the point $[X]\in S$. Since the fibers of a syntomic morphism are local complete intersections,  Lemma~\ref{lemma:syntomic} implies that 
$\hilb (X)$ is a local complete intersection at the point $[C\subset X]$ of dimension $g^2-1$.

Let us denote by $\mathrm{Obs}(C/X)$ the minimal obstruction space in $\ext^1_C(\cC_{C/X},\cO_C)$ for deformations of $C$ in $X$.
By (iv) of Theorem~2.8 in \cite{Kollar} and the identity $(3)$, there is an inequality
\[\begin{array}{ll}
\dim_{[C\subset X]}\hilb (X)=g^2-1&\geq h^0(C,\cN_{C/X})-\dim\mathrm{Obs}(C/X)=\\
&=g^2-1+\dim\ext^1_C(\cC_{C/X},\cO_C)-\dim\mathrm{Obs}(C/X),
\end{array}\]
which  shows that $\dim\ext^1_C(\cC_{C/X},\cO_C)-\dim\mathrm{Obs}(C/X)=0$.  Thus the theorem follows.
\end{proof}

\begin{remark}\label{relative}Let $\mathrm{Obs}_S(C/{\mathcal U}_S)$ be the minimal obstruction space in $\ext^1_C(\cC_{C/X},\cO_C)$ for deformations of 
$C$ in ${\mathcal U}_S/S$. A corollary of the proof of Lemma~\ref{lemma:syntomic} is that $\mathrm{Obs}_S(C/{\mathcal U}_S)=\ext^1_C(\cC_{C/X},\cO_C)$.
Since $\ext^1_C(\cC_{C/X},\cO_C)$ is also the obstruction space of an obstruction theory for the embedding $C\subset X$,
it follows that there is a natural injective linear map $i\co\mathrm{Obs}(C/X)\hookra\mathrm{Obs}_S(C/{\mathcal U}_S)$. By the functorial property 
of obstruction theories, the map $i$ has a natural (not necessarily linear) section $s\co\mathrm{Obs}_S(C/{\mathcal U}_S)\to\mathrm{Obs}(C/X)$ induced 
by specializing to $[X]\in S$ the tiny extensions from which the elements of the minimal obstruction space $\mathrm{Obs}_S(C/{\mathcal U}_S)$ arise. 
We have just proved that $i$ is actually an isomorphism. However, this was not clear a priori. In fact, $\mathrm{Obs}_S(C/{\mathcal U}_S)$ could have 
contained obstructions coming from tiny extensions which became trivial when specialized to the point $[X]\in S$.
\end{remark}

By the local-global spectral sequence for $\ext$ and the vanishing of  $E_2^{2,0}=H^2(C,\cN_{C/X})$, there is a short exact sequence: 
\[0\to H^1(C,\cN_{C/X})\to\ext^1_C(\cC_{C/X},\cO_C)\to H^0(C,{\mathcal Ext}^1(\cC_{C/X},\cO_C))\to 0.\]

\begin{lemma}\label{lemma:sheafext}
Let $D$ be the (effective)  divisor on $C$ defined by the ideal defining  the singular locus of $X$. Then there is a short exact sequence
\[0\to\cN_{C/X}\to\cN_{C/\PP}\to\Omega_C^{\otimes 2}(-D)\to 0\]
and the sheaf ${\mathcal Ext}^1_C(\cC_{C/X},\cO_C)$ can be canonically identified with the direct image on $C$ of the skyscraper sheaf 
$\cO_D\otimes\Omega_C^{\otimes 2}$.
In particular, $\dim H^0(C,{\mathcal Ext}^1(\cC_{C/X},\cO_C))=\deg D$.
\end{lemma}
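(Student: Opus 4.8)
The plan is to dualize the locally free resolution of $\cC_{C/X}$ established in the proof of Lemma~\ref{lemma:sheafD} and then identify the resulting cokernel explicitly. Starting from that resolution
\[
0\to(\Omega_C^{\otimes 2})^\vee\xrightarrow{\alpha}\cC_{C/\PP}\xrightarrow{p}\cC_{C/X}\to0,
\]
in which $\alpha$ is the restriction to $C$ of the first arrow $\cO_C\otimes\cC_{X/\PP}\to\cC_{C/\PP}$ of the conormal sequence of the chain $C\subset X\subset\PP$, I would apply ${\mathcal Hom}_C(-,\cO_C)$. Since $\cC_{C/\PP}$ is locally free, ${\mathcal Ext}^1_C(\cC_{C/\PP},\cO_C)=0$, and one obtains a four-term exact sequence of sheaves
\[
0\to\cN_{C/X}\to\cN_{C/\PP}\xrightarrow{\alpha^\vee}\Omega_C^{\otimes 2}\to{\mathcal Ext}^1_C(\cC_{C/X},\cO_C)\to0.
\]
The whole lemma then reduces to one point: $\Im(\alpha^\vee)$ is necessarily of the form $\Omega_C^{\otimes 2}\otimes\cI$ for a uniquely determined ideal sheaf $\cI\subseteq\cO_C$, and I must show that $\cI=\cO_C(-D)$.

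To compute $\cI$ I would make $\alpha$ explicit. Write $X=\{Q=0\}$ with $Q\in\sym^2H^0(C,\Omega_C)$ and recall $\cC_{X/\PP}\cong\cO_\PP(-2)$. Then, under the evident identifications and the conormal inclusion $\cC_{C/\PP}\hookrightarrow\Omega_\PP|_C$ (injective because $C$ is smooth), the composite $(\Omega_C^{\otimes 2})^\vee\cong\cO_C\otimes\cC_{X/\PP}\to\cC_{C/\PP}\to\Omega_\PP|_C$ is the $\cO_C$-linear map corresponding to the section $dQ|_C\in H^0(C,\Omega_\PP|_C\otimes\Omega_C^{\otimes 2})$, i.e.\ the restriction to $C$ of the canonical section $dQ$ of $\Omega_\PP(2)|_X$ (Euler's identity $\sum_ix_i\,\partial_iQ=2Q$ is what makes the tuple of partials land in $\Omega_\PP(2)|_X$). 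Dualizing, and using that $\cO_C\otimes\cT_\PP\to\cN_{C/\PP}$ is surjective (Lemma~\ref{lemma:L1}), one finds that $\Im(\alpha^\vee)$ coincides with the image of the contraction map $\cO_C\otimes\cT_\PP\to\Omega_C^{\otimes 2}$, $v\mapsto\langle dQ|_C,v\rangle$ — the sheaf version of the map analysed in the proof of Corollary~\ref{cor:van}, whose global sections recover $\phi_X$. Hence, locally, $\cI$ is the ideal generated by the coefficients of $dQ|_C$ in a local frame of $\Omega_\PP|_C$, equivalently the Jacobian ideal $(\partial_0Q,\dots,\partial_{g-1}Q)$ of $Q$ restricted to $C$. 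Since for a quadric the Jacobian ideal defines $\operatorname{Sing}(X)$ scheme-theoretically, this yields $\cI=\cO_C(-D)$; that $D$ is then a genuine effective divisor is automatic, since every nonzero ideal sheaf on the smooth curve $C$ is invertible.

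Granting $\Im(\alpha^\vee)=\Omega_C^{\otimes 2}(-D)$, the four-term sequence breaks up into the asserted short exact sequence $0\to\cN_{C/X}\to\cN_{C/\PP}\to\Omega_C^{\otimes 2}(-D)\to0$ together with a canonical isomorphism ${\mathcal Ext}^1_C(\cC_{C/X},\cO_C)\cong\Omega_C^{\otimes 2}/\Omega_C^{\otimes 2}(-D)=\Omega_C^{\otimes 2}\otimes\cO_D$, which is exactly the direct image on $C$ of the skyscraper sheaf $\cO_D\otimes\Omega_C^{\otimes 2}$. Finally, this sheaf is supported on the finite scheme $D$, so its $H^1$ vanishes and $\dim H^0(C,{\mathcal Ext}^1_C(\cC_{C/X},\cO_C))$ equals its total $\cO_C$-length; tensoring by the invertible sheaf $\Omega_C^{\otimes 2}$ leaves lengths unchanged, so this number is $\operatorname{length}_{\cO_C}\cO_D=\deg D$. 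I expect the one genuinely delicate step to be the identification of $\cI$ with $\cO_C(-D)$: one has to treat ``$dQ$'' correctly through the Euler sequence — it is a section of $\Omega_\PP(2)$ over $X$, not a tuple of partials on the affine cone — and check that the scheme structure on $C\cap\operatorname{Sing}(X)$ produced by the image ideal agrees on the nose with the one defining $D$ in the statement. Since both unwind to the Jacobian ideal $(\partial_iQ)$, this is essentially bookkeeping; everything else is formal homological algebra together with the standard length count for torsion sheaves on a curve.
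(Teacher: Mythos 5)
Your proposal is correct and follows essentially the same route as the paper: dualize the locally free resolution of $\cC_{C/X}$ from Lemma~\ref{lemma:sheafD} and identify the image of the dual map $\cN_{C/\PP}\to\Omega_C^{\otimes 2}$ as $\Omega_C^{\otimes 2}$ twisted by the Jacobian ideal of $Q$ restricted to $C$, i.e.\ by the ideal of $\operatorname{Sing}(X)$. The only (cosmetic) difference is that you phrase the key computation globally via the section $dQ$ of $\Omega_\PP(2)|_X$ and Euler's identity, whereas the paper works with a generator $q$ of the ideal of $X$ on a standard affine chart and its partial derivatives.
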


\begin{proof}In the proof of Lemma~\ref{lemma:sheafD}, we obtained the locally free resolution of $\cC_{C/X}$:
\[
0\to (\Omega_C^{\otimes 2})^\vee\to\cC_{C/\PP}\sr{p}{\to}\cC_{C/X}\to 0.
\]
The  map $\cO_C\otimes\cC_{X/\PP}\cong(\Omega_C^{\otimes 2})^\vee\to\cC_{C/\PP}$ is best understood in terms of affine local coordinates.
Let $U$ be a standard affine open subset of $\PP$ and $q$ a generator of the ideal of the affine variety $U\cap X$. Then $q$ is a local generator
for the sheaf $\cO_C\otimes\cC_{X/\PP}$ on $U\cap C$ and $dq$ is a differential on $U$ whose restriction to $U\cap C$ vanishes on the tangent fields 
to $U\cap C$ and then defines an element of the conormal sheaf $\cI_C/\cI^2_C=\cC_{C/X}$. The restriction of $dq$ to $C$ is just the image of $q$ 
under the map $\cO_C\otimes\cC_{X/\PP}\to\cC_{C/\PP}$.

Since the partial derivatives of $q$, with respect to a system of affine coordinates in $U$, define the ideal of the singular locus of $X$, the
$\cO_C$-dual $\cN_{C/\PP}\to\Omega_C^{\otimes 2}$ of the above map has  $\Omega_C^{\otimes 2}(-D)$ as image and hence $\cO_D\otimes\Omega_C^{\otimes 2}$ as cokernel. This proves all the statements of the lemma.
\end{proof}

An immediate consequence of Theorem~\ref{obstruction2} and Lemma~\ref{lemma:sheafext} is then:

\begin{corollary}\label{singular}
Under the hypotheses of Theorem~\ref{obstruction2}, $\dim\mathrm{Obs}(C/X)\geq\deg D$. So if
the deformations of $C$ in $X$ are unobstructed, then $C$ does not meet the singular locus of $X$.
\end{corollary}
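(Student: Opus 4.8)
The plan is to deduce the bound directly from part (ii) of Theorem~\ref{obstruction2} together with the local-global spectral sequence displayed just before Lemma~\ref{lemma:sheafext}. First I would invoke part (ii), which identifies the minimal obstruction space $\mathrm{Obs}(C/X)$ with the \emph{entire} group $\ext^1_C(\cC_{C/X},\cO_C)$; this is the crucial input, since a priori $\mathrm{Obs}(C/X)$ could be a proper subspace. Then I would read off from the short exact sequence
\[
0\to H^1(C,\cN_{C/X})\to\ext^1_C(\cC_{C/X},\cO_C)\to H^0(C,{\mathcal Ext}^1(\cC_{C/X},\cO_C))\to 0
\]
the surjectivity of the third arrow, whence $\dim\ext^1_C(\cC_{C/X},\cO_C)\ge \dim H^0(C,{\mathcal Ext}^1(\cC_{C/X},\cO_C))$. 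By Lemma~\ref{lemma:sheafext} the latter dimension equals $\deg D$, where $D$ is the divisor cut out on $C$ by the ideal of the singular locus of $X$. Combining these, $\dim\mathrm{Obs}(C/X)=\dim\ext^1_C(\cC_{C/X},\cO_C)\ge \deg D$, which is the first assertion.

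For the second assertion I would argue contrapositively. If $C$ meets the singular locus of $X$, then $D$ is a nonzero effective divisor on $C$, so $\deg D>0$; by the inequality just established $\dim\mathrm{Obs}(C/X)>0$, i.e.\ there exist genuine obstructions, so the deformations of $C$ in $X$ are obstructed. Equivalently, if those deformations are unobstructed then $\deg D=0$, hence $D=0$ since $D$ is effective, which means that the ideal of the singular locus of $X$ restricts to the unit ideal on $C$, i.e.\ that $C$ is disjoint from that singular locus.

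There is essentially no obstacle in this step: all of the real content resides in Theorem~\ref{obstruction2}(ii) (the minimality statement) and in the computation of the local $\ext$-sheaf in Lemma~\ref{lemma:sheafext}. The only point requiring care is to use part (ii) rather than merely the exact sequence of Lemma~\ref{lemma:sheafD}: without the identification of $\mathrm{Obs}(C/X)$ with the full $\ext^1$, one would only bound $\dim\ext^1_C(\cC_{C/X},\cO_C)$ from below, not the dimension of the actual obstruction space, and the deduction that meeting the singular locus forces obstructedness would break down.
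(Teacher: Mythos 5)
Your proof is correct and follows exactly the route the paper intends: the corollary is stated there as an "immediate consequence" of Theorem~\ref{obstruction2}(ii) (which gives $\mathrm{Obs}(C/X)=\ext^1_C(\cC_{C/X},\cO_C)$) together with the surjection onto $H^0(C,{\mathcal Ext}^1(\cC_{C/X},\cO_C))$ of dimension $\deg D$ from Lemma~\ref{lemma:sheafext}. Your closing remark correctly isolates the essential point, namely that the minimality statement in part (ii) is what converts the lower bound on $\dim\ext^1$ into a lower bound on the actual obstruction space.
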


\begin{question}Are the deformations of $C$ in $X$ unobstructed \emph{if and only if} the curve $C$ does not meet the singular locus of the quadric $X$? In other words, does the last property imply the vanishing of $H^1(C, \cN_{C/X})$?
\end{question}

\end{document}